\numberwithin{equation}{section}
\newcommand{\R}{\mathbb R}
\newcommand{\E}{\mathbb E}
\def \P {\mathbb{P}}
\def \e {\varepsilon}
\newcommand{\tr}{\textnormal{tr}}
\DeclareMathOperator*{\adj}{adj}
\newtheorem{theorem}{Theorem}[section]
\newtheorem{lemma}{Lemma}[section]
\theoremstyle{remark}
\newtheorem{remark}[theorem]{Remark}
\begin{document}

%%%%%%%%%%%%%%%%%%%%%%%%%%%%%%%%%%%%%%%%%%%%%

\title{Smoothed analysis of symmetric random matrices with continuous distributions}

\author{Brendan Farrell}
\address{Computing and Mathematical Sciences, California Institute of Technology, 1200 E. California Blvd., Pasadena, CA 91125, U.S.A. }
\email{farrell@cms.caltech.edu}
\author{Roman Vershynin}
\address{Department of Mathematics, University of Michigan, 530 Church St., Ann Arbor, MI 48109, U.S.A.}
\email{romanv@umich.edu}

\subjclass[2010]{60B20,15B52}

\date{\today}

\begin{abstract}
  We study invertibility of matrices of the form $D+R$ where $D$ is an arbitrary symmetric deterministic 
  matrix, and $R$ is a symmetric random matrix whose independent entries have continuous distributions with bounded densities. 
  We show that $\|(D+R)^{-1}\| = O(n^2)$ with high probability. The bound is completely independent of $D$. 
  No moment assumptions are placed on $R$; in particular the entries of $R$ can be arbitrarily heavy-tailed. 
\end{abstract}

\maketitle

\section{Introduction}

This note concerns the invertibility properties of $n \times n$ random matrices of the type 
$D+R$, where $D$ is an arbitrary deterministic matrix and $R$ is a random matrix
with independent entries.  
What is the typical value of the spectral norm of the inverse, $\|(D+R)^{-1}\|$?

This question is usually asked in the context of {\em smoothed analysis} of algorithms \cite{ST ICM}.
There $D$ is regarded as a given matrix, possibly poorly invertible, and $R$ models random noise.
Heuristically, adding noise should improve invertibility properties of $D$, so 
the typical value $\|(D+R)^{-1}\|$ should be nicely bounded for any $D$. 
Sometimes this is true, but sometimes not quite. 

This is indeed the case when $R$ is a real Ginibre matrix, i.e. the entries of $R$ are independent $N(0,1)$
random variables. A result of Sankar, Spielman and Teng \cite{SST} states that 
\begin{equation}				\label{eq: SST}
\P \big\{ \|(D+R)^{-1}\| \ge t \sqrt{n} \big\} \le 2.35/t, \quad t > 0.
\end{equation}
In particular, $\|(D+R)^{-1}\| = O(\sqrt{n})$ with high probability. 
Note that this bound is independent of $D$. It is sharp for $D=0$, since 
$\|R^{-1}\| \gtrsim \sqrt{n}$ with high probability (\cite{Edelman}, see \cite{RV CRAS}).

For general non-Gaussian matrices $R$ a new phenomenon emerges: 
{\em invertibility of $D+R$ can deteriorate as $\|D\| \to \infty$}.

\medskip

Suppose the entries of $R$ are sub-gaussian\footnote{See \cite{V tutorial} for an introduction to sub-gaussian distributions. 
Briefly, a random variable $X$ is sub-gaussian if $p^{-1/2} (\E |X|^p)^{1/p} \le K < \infty$ for all $p \ge 1$;
the smallest $K$ can be called the sub-gaussian moment of $X$.} 
i.i.d. random variables with mean zero and variance one.
Then a result of Rudelson and Vershynin \cite{RV square} (as adapted by Pan and Zhou \cite{PZ})
states that as long as $\|D\| = O(\sqrt{n})$, one has 
$$
\P \big\{ \|(D+R)^{-1}\| \ge t \sqrt{n} \big\} \le C/t + c^n, \quad t > 0.
$$
Here $C>0$ and $c \in (0,1)$ depend only on a bound on the sub-gaussian moments of the entries
of $R$ and on $\|D\|/\sqrt{n}$.

Surprisingly, sensitivity to $\|D\|$ is not an artifact of the proof, but a genuine limitation.
Indeed, consider the example where each entry of $R$ equals $1$ and $-1$ with probability $1/4$ 
and $0$ with probability $1/2$. 
Let $D$ be the diagonal matrix with diagonal entries $(0, d, d, \ldots, d)$. 
Then one can show\footnote{This example is due to 
  M.~Rudelson (unpublished); a similar phenomenon was discovered independently by Tao and Vu \cite{TV smooth}.} 
that $\|(D+R)^{-1}\| \gtrsim d / \sqrt{n}$ with probability $1/2$.
In particular, $\|(D+R)^{-1}\| \gg \sqrt{n}$ as soon as $\|D\| = d \gg n$.

Note however that the typical value of $\|(D+R)^{-1}\|$ remains polynomial in $n$ as long as 
$\|D\|$ is polynomial in $n$. This result is due to Tao and Vu \cite{TV circ, TV STOC, TV smooth};
Nguyen \cite{Nguyen} proved a similar result for {\em symmetric} random matrices $R$.
\medskip

To summarize, as long as the deterministic part $D$ is not too large, $\|D\| = O(\sqrt{n})$,
Sankar-Spielman-Teng's invertibility bound \eqref{eq: SST} remains essentially valid 
for general random matrices $R$ (with i.i.d. subgaussian entries with zero mean and unit variance).
For very large deterministic parts ($\|D\| \gg n$), the bound can fail. 
It is not clear what happens in the regime $\sqrt{n} \ll \|D\| \lesssim n$.

Taking into account all these results, it would be interesting to describe
 ensembles of random matrices $R$ for which invertibility properties of $D+R$ are independent 
  of $D$.
In this note we show that if the entries of a symmetric matrix $R$ have {\em continuous} distributions,
then the typical value of $\|(D+R)^{-1}\|$ is polynomially bounded {\em independently of $D$}; 
in particular the bound does not deteriorate as $\|D\| \to \infty$.

\begin{theorem}\label{thm: main}
Let $A$ be an $n\times n$ symmetric random matrix in which the 
entries $\{A_{i,j}\}_{1\le i\le j\le n}$ are independent and have continuous distributions with densities bounded by $K$.  
Then for all $t>0$, 
\begin{equation}
\P\left\{ \|A^{-1}\| \ge n^2 t \right\}\le  8K/t.
\end{equation}
\end{theorem}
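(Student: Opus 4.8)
The plan is to control $\|A^{-1}\|$ via the smallest singular value $s_n(A) = 1/\|A^{-1}\|$, and to bound this from below by a compressibility-free argument adapted to the symmetric setting. Write $s_n(A) = \inf_{\|x\|=1} \|Ax\|$. We want $\P\{s_n(A) \le 1/(n^2 t)\} \le 8K/t$. The key observation is that, since the upper-triangular entries of $A$ are independent with densities bounded by $K$, conditioning on all entries of $A$ except those in a single row/column leaves a one-dimensional (or low-dimensional) affine family with a genuinely continuous distribution, so distance-type quantities cannot concentrate near a point.

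The main steps are as follows. First I would pass from the operator norm of the inverse to the minimal distance between a row of $A$ and the span of the remaining rows. For a symmetric matrix one must be slightly careful because the $i$-th row and $i$-th column share the entry $A_{i,i}$ and, more importantly, the off-diagonal entries are shared between two rows; nonetheless, there is a standard reduction: if $s_n(A)$ is small, then for the unit null-ish vector $x$ there is a coordinate $i$ with $|x_i| \ge 1/\sqrt n$, and then $\mathrm{dist}(A_i, H_i) \le \|Ax\|/|x_i| \le \sqrt n\, \|Ax\|$, where $A_i$ is the $i$-th row and $H_i = \mathrm{span}(A_k : k \ne i)$. So $\P\{s_n(A) \le \varepsilon\} \le \sum_{i=1}^n \P\{\mathrm{dist}(A_i, H_i) \le \sqrt n\, \varepsilon\}$. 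Second, for a fixed $i$ I would condition on the entire matrix with the $i$-th row and column removed — call the principal submatrix $A^{(i)}$ — together with whatever is needed to determine $H_i$. The distance $\mathrm{dist}(A_i, H_i)$ equals $|\langle w, A_i \rangle|$ for the unit normal $w$ to $H_i$; here $w$ is a fixed vector once we condition on everything except the $i$-th row. But $A_i$ has independent coordinates $A_{i,1}, \dots, A_{i,n}$ (the diagonal term $A_{i,i}$ and the off-diagonal terms $A_{i,j}$, $j\ne i$), each with density bounded by $K$, and crucially $w$ is independent of $A_i$ after this conditioning — except that $w$ is built from the off-diagonal entries $A_{i,j}$ themselves, since those entries also appear in the columns indexed by $i$. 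This coupling is the genuine obstacle; I return to it below. Third, modulo that issue, $\langle w, A_i\rangle = \sum_j w_j A_{i,j}$ is a weighted sum of independent bounded-density random variables, so the anti-concentration Lemma (any single coordinate with $|w_j|$ bounded below suffices) gives $\P\{|\langle w, A_i\rangle| \le s\} \le 2Ks/|w_{j_0}|$ for a coordinate with $|w_{j_0}|$ largest, hence $\le 2Ks\sqrt n$ since $\|w\|=1$. Combining the three steps with $s = \sqrt n\,\varepsilon$ and $\varepsilon = 1/(n^2 t)$ yields $\P\{s_n(A)\le \varepsilon\} \le n \cdot 2K\sqrt n \cdot \sqrt n\, \varepsilon = 2Kn^2\varepsilon = 2K/t$, comfortably inside the claimed $8K/t$ (the slack absorbs the care needed in the reduction steps).

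The hard part is the dependence of the normal vector $w$ on the row $A_i$: because $A$ is symmetric, the off-diagonal entries $A_{i,j}$ sit in both the $i$-th row (which we are probing) and in the other rows (which determine $H_i$ and hence $w$). The clean fix is to \emph{not} remove a full row but to isolate a single entry. Condition on all entries of $A$ except $A_{i,i}$ for a well-chosen $i$; then $A$ is an affine function $A = A_0 + A_{i,i} e_i e_i^{\top}$ of the one scalar $A_{i,i}$, which still has density bounded by $K$. By a determinant/adjugate computation, $\det A$ is an affine (degree-$\le 1$) function of $A_{i,i}$, and $\|A^{-1}\| = \|\mathrm{adj}(A)\|/|\det A|$; one then needs (i) a lower bound, on a large-probability event, for $\|\mathrm{adj}(A)\|$ or for the leading coefficient in $A_{i,i}$ (which is the $(i,i)$ cofactor, i.e.\ $\det A^{(i)}$), and (ii) anti-concentration of the affine function $\det A$ of the bounded-density variable $A_{i,i}$, giving $\P\{|\det A| \le s\,|\text{coeff}|\} \le 2Ks$. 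Choosing $i$ to maximize $|\det A^{(i)}|$ (equivalently, the diagonal entry of $\mathrm{adj}(A)$ of largest absolute value), together with the trivial bound relating $\max_i |\det A^{(i)}|$ to $\|\mathrm{adj}(A)\| = \|A^{-1}\|\,|\det A|$ and a crude bound on the remaining $n-1$ columns of the adjugate via Cramer/Hadamard, closes the loop and produces the polynomial-in-$n$ loss. Making the bookkeeping in this last step tight enough to land at exactly $n^2$ (rather than a larger power) is where the real work lies; the anti-concentration input itself is elementary given bounded densities.
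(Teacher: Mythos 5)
Your first route founders on exactly the obstacle you yourself name: after conditioning on everything outside the $i$-th row, the unit normal $w$ to $H_i$ is a function of the off-diagonal entries $A_{i,j}$ that also populate the row you are probing, so $\langle w, A_i\rangle$ is not a fixed-coefficient linear form in independent bounded-density variables, and the estimate $\P\{|\langle w,A_i\rangle|\le s\}\le 2Ks\sqrt{n}$ has no justification. This is not a technicality one can wave away with slack in the constants: removing this dependence is the central difficulty in all least-singular-value results for symmetric random matrices (Nguyen, Vershynin), and the paper's whole point is to bypass it by a different device.

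Your proposed fix (condition on a single diagonal entry $A_{i,i}$, use that $\det A$ is affine in $A_{i,i}$ with leading coefficient the cofactor $\det A^{(i)}$) has a genuine gap at your step (i). There is no inequality, even up to polynomial factors, between $\max_i |\det A^{(i)}| = \max_i |\mathrm{adj}(A)_{i,i}|$ and $\|\mathrm{adj}(A)\|$: a symmetric matrix can have all diagonal entries of its adjugate tiny while the off-diagonal ones are huge (already a $2\times 2$ matrix with near-zero diagonal and small off-diagonal entries has $\|A^{-1}\|$ enormous while every diagonal cofactor is near zero). When $\|A^{-1}\|$ is carried by off-diagonal entries of the adjugate, perturbing $A_{i,i}$ gives no useful anti-concentration at the relevant scale, so the event $\{\|A^{-1}\|\ge n^2t\}$ is not controlled. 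Moreover, the planned "crude bound on the remaining columns of the adjugate via Cramer/Hadamard" is unavailable in this theorem's generality: there are no upper-bound or moment assumptions on the entries (the deterministic part is arbitrary and the tails may be arbitrarily heavy), so Hadamard-type bounds on cofactors cannot hold with high probability. What is actually needed, and what the paper does, is to treat \emph{every} entry of $A^{-1}$: condition on all entries except $A_{i,j}$; for $i\ne j$ symmetry makes $\det A$ quadratic in $A_{i,j}$, so $(A^{-1})_{i,j} = X/(X^2+q)$ with $X=A_{i,j}+p$, and its anti-concentration needs a small separate argument (the map $x\mapsto x-s/x$ has derivative at least $1$). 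The resulting entrywise weak-$L_1$ bounds are then combined not by a union bound requiring lower bounds on coefficients, but by Hagelstein's weak-$L_1$ inequality applied to the Hilbert--Schmidt norm, which is exactly what produces the factor $n^2$ and the constant $8$ with no auxiliary events at all.
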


Since we do not assume that the entries have mean zero, this theorem can be applied
to matrices of type $A = D+R$, and it yields that $\|(D+R)^{-1}\| = O(n^2)$ with high probability. 
This bound holds for any deterministic symmetric matrix $D$, large and small. 
We conjecture that the bound can be improved to $O(\sqrt{n})$ 
as in Sankar-Spielman-Teng's result \eqref{eq: SST}.

\begin{remark}
  We do not place any upper bound assumptions in Theorem~\ref{thm: main}, 
  either on the deterministic part $D$ or the random part $R$. In particular, the entries of $R$ can be arbitrarily heavy-tailed. 
  The upper bound $K$ on the densities precludes the distributions concentrating near any value, 
  so effectively it is a lower bound on concentration.  
\end{remark}

\begin{remark}
  A result in the same spirit as Theorem~\ref{thm: main} was proved recently by Rudelson and Vershynin \cite{RV unitary} 
  for a different ensemble of random matrices $R$, namely for {\em random unitary matrices}. 
  If $R$ is uniformly distributed in $U(n)$ then 
  $$
  \P \big\{ \|(D+R)^{-1}\| \ge t n^C \big\} \le t^{-c}, \quad t > 0.
  $$
  As in Theorem~\ref{thm: main}, $D$ can be an arbitrary deterministic $n \times n$ matrix;
  $C, c>0$ denote absolute constants (independent of $D$). 
\end{remark}

\begin{remark}
  For the specific class where $D$ is a multiple of identity, sharper results are available than Theorem~\ref{thm: main}. 
  In particular, results by Erd\H{o}s, Schlein and Yau \cite{ESY Wegner} and 
  Vershynin \cite{V sym} yield an essentially optimal bound on the resolvent, $\|(D-zI)^{-1}\| = O(\sqrt{n})$. 
  Moreover, the latter estimate does not require that the entries of $D$ have continuous distributions; 
  see \cite{ESY Wegner, V sym} for details.
\end{remark}

\begin{remark}
  While Theorem~\ref{thm: main} is stated for symmetric matrices, 
  it holds as well for Hermitian matrices.
  The proof for the Hermitian case only requires an easy change to the proof of Lemma~\ref{boundentry} below. 
\end{remark}

\begin{remark}
  The proof of Theorem~\ref{thm: main} shows that one can relax the assumption of joint independence 
  of the entries. Is suffices to assume that the individual distribution of each entry $A_{ij}$, 
  conditioned on all other entries except $A_{ji}$, has density bounded by $K$.
\end{remark}

In the rest of the paper, we prove Theorem~\ref{thm: main}. 
The argument is very short and is based on computing the influence of each entry of $A$
on the corresponding entry of $A^{-1}$.

\section{Proof of Theorem~\ref{thm: main}}
%---------------------------------

Recall that the {\em weak $L_p$ norm} of a random variable $X$ is 
\begin{equation}
\|X\|_{p,\infty} :=\sup_{t>0}t \left(\P\{|X|>t\}\right)^{1/p}, \quad 0<p<\infty. 
\end{equation}

\begin{lemma}		\label{boundentry}
  Let $A$ be the random matrix defined in Theorem~\ref{thm: main}. 
  Then for all $1 \le i,j\le n$, 
  $$
  \|(A^{-1})_{i,j}\|_{1,\infty} \le 2K.
  $$
\end{lemma}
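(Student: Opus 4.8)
The plan is to express the $(i,j)$ entry of $A^{-1}$ via Cramer's rule as a ratio of determinants, $(A^{-1})_{i,j} = \pm \det(A_{\hat\jmath,\hat\imath})/\det(A)$, where $A_{\hat\jmath,\hat\imath}$ denotes the minor obtained by deleting row $j$ and column $i$. Equivalently, writing $\adj(A)$ for the adjugate, $(A^{-1})_{i,j} = (\adj A)_{i,j}/\det A$. The key observation is that for fixed $i,j$, the entry $A_{i,j}$ (and its mirror $A_{j,i}=A_{i,j}$ in the symmetric case) appears in $\det A$ in a very controlled way, while both $\det A$ and the relevant cofactor are, as functions of the single scalar variable $A_{i,j}$, low-degree polynomials. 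So I would first condition on all entries of $A$ except $A_{i,j}$, and study $(A^{-1})_{i,j}$ as a function of the one remaining random variable $x = A_{i,j}$.

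First I would write $\det A$ as a function of $x$. When $i=j$, expanding along row $i$ shows $\det A = c_1 x + c_0$ where $c_1$ is the $(i,i)$ cofactor, which does \emph{not} depend on $x$. When $i\ne j$, the variable $x$ sits in two symmetric positions, so $\det A$ is a quadratic $c_2 x^2 + c_1 x + c_0$ in $x$; here $c_2$ is (up to sign) the cofactor of the $2\times 2$ principal submatrix on rows/columns $\{i,j\}$, again independent of $x$. In either case, the numerator $(\adj A)_{i,j}$ is the cofactor corresponding to deleting row $j$ and column $i$; since that minor omits row $j$ and column $i$, and the only occurrences of $x$ were in positions $(i,j)$ and $(j,i)$, the numerator is a polynomial in $x$ of degree at most one less than the denominator's (degree $0$ in the $i=j$ case, degree $\le 1$ in the $i \ne j$ case). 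The upshot is that $(A^{-1})_{i,j} = P(x)/Q(x)$ is a rational function whose numerator has degree strictly less than its denominator, with coefficients measurable with respect to the conditioning.

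Next I would exploit this structure to bound the conditional probability $\P\{|(A^{-1})_{i,j}| > t \mid \text{rest}\}$. The event $|P(x)/Q(x)| > t$ is contained in the event $|Q(x)| < |P(x)|/t$; since $P$ has degree at most one and $Q$ has degree at most two with leading coefficient of the \emph{same} magnitude relation, a cleaner route is to observe that $\{ (A^{-1})_{i,j} \ne 0,\ |(A^{-1})_{i,j}| > t\}$ forces $x$ to lie in a union of at most two intervals whose total length is at most $2/t$ (the Hermitian-case remark suggests this is where the symmetric-versus-Hermitian distinction enters: in the Hermitian case $x$ ranges over a line in $\C$ after fixing the argument, or one treats real and imaginary parts). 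Concretely, for the diagonal case $(A^{-1})_{i,i} = c_0/(c_1 x + c_0')$-type expression, $|(A^{-1})_{i,i}| > t$ means $x$ is within $1/(t|c_1|) \cdot |\text{numerator}|$... rather than chase constants, the clean statement is: as a function of $x$, the set $\{|(A^{-1})_{i,j}| > t\}$ has Lebesgue measure at most $2/t$, because $1/(A^{-1})_{i,j} = Q(x)/P(x)$ has a derivative bounded below in the relevant region, or directly because $(A^{-1})_{i,j}$ restricted to each monotonicity interval is a Möbius-type function. Since $x$ has a density bounded by $K$, conditionally $\P\{|(A^{-1})_{i,j}| > t \mid \text{rest}\} \le K \cdot (2/t) = 2K/t$, and taking expectation over the conditioning removes the conditioning. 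This gives $\|(A^{-1})_{i,j}\|_{1,\infty} = \sup_t t\,\P\{|(A^{-1})_{i,j}|>t\} \le 2K$.

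The main obstacle I anticipate is making the ``measure at most $2/t$'' step fully rigorous and genuinely uniform: one must rule out degeneracies (e.g. $Q \equiv 0$ or $P \equiv 0$ on the conditioning event, which happens only on a null set since the coefficients are themselves polynomials in the continuously-distributed other entries), and one must correctly count the monotonicity intervals of the rational function $x \mapsto P(x)/Q(x)$ and bound the measure of its super-level set $\{|f| > t\}$ by $2/t$ — this hinges on the precise degrees computed above (numerator degree $<$ denominator degree by exactly one, leading coefficients matched), so that $f$ is, on each of its at most two branches, a fractional linear function of $x$ whose super-level set is an interval of length $\le 1/t$ times a constant I need to verify is $1$. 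Getting that constant right, and handling the at-most-two-branches bookkeeping cleanly, is the crux; everything else is conditioning and Fubini.
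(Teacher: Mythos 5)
Your setup --- conditioning on all entries except $A_{i,j}$, Cramer's rule, and the degree count (for $i\ne j$ the denominator $|A|$ is quadratic in $x=A_{i,j}$ and the numerator cofactor is linear; for $i=j$, linear over constant) --- is exactly the paper's. But the step you yourself call the crux, that the set $\{x:\ |P(x)/Q(x)|>t\}$ has Lebesgue measure at most $2/t$, does not follow from what you have assembled (degrees plus matched leading coefficients), and this is precisely where the paper's key algebraic input enters. The needed fact is an \emph{identity} between numerator and denominator, not a degree count: by the Jacobi formula and symmetry, for $i\ne j$ one has $\frac{d}{dA_{i,j}}|A| = 2(-1)^{i+j}|A_{(i,j)}|$, i.e.\ the Cramer numerator is, up to sign and a factor $2$, the derivative of the denominator in the variable $A_{i,j}$. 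Completing the square then puts the entry in the normal form $(A^{-1})_{i,j} = X/(X^2+q)$ with $X=A_{i,j}+p$: the zero of the numerator sits exactly at the vertex of the denominator's parabola. Only this normal form yields the $2/t$ bound: if $q\ge 0$ then $|X/(X^2+q)|\le 1/|X|$, so the level set lies in an interval of length $2/t$; if $q=-s<0$ then $1/f = X-s/X$ increases with derivative $1+s/X^2>1$ on each of its two branches, and a short computation with the roots shows that $\{|f|>t\}$ is two intervals of total length at most $2/t$.

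Without that identity your claimed measure bound is simply false: take $P(x)=x$ and $Q(x)=(x-\delta)^2$, which has numerator degree exactly one less and matching leading coefficients; then $\{|P/Q|>t\}$ has measure of order $\sqrt{\delta/t}$, so $\sup_{t>0} t\,\big|\{|P/Q|>t\}\big|=\infty$ and no weak-$L_1$ bound holds. Such a configuration cannot occur for $A^{-1}$, but ruling it out is exactly the content of the Jacobi-formula computation missing from your sketch. Two further inaccuracies: $X/(X^2+q)$ is not a fractional linear function on its monotonicity branches, and for $q>0$ the derivative of $1/f=X+q/X$ is \emph{not} bounded below (it vanishes at $X=\pm\sqrt q$), so that case needs the separate elementary bound $|f|\le 1/|X|$ rather than any derivative argument. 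In the diagonal case you actually hold the needed identity --- the numerator and the coefficient of $x$ in $|A|=|A_{(i,i)}|x+c$ are the same cofactor --- which gives $(A^{-1})_{i,i}=1/(x+c/|A_{(i,i)}|)$ and the clean $2K/t$ bound; you should use it explicitly instead of deferring the constant. (Degeneracies such as $|A_{(i,j),(j,i)}|=0$ are indeed null events, as you note, since the other entries have continuous distributions.)
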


\begin{proof}
Let us determine how a single entry of the inverse, say $(A^{-1})_{i,j}$, depends on the corresponding entry 
of $A$, i.e. $A_{i,j}$. To this end, let us condition on all entries of $A$ except $A_{i,j}$, thus treating them as constants.
We could proceed by the cofactor expansion. But we find it easier to use Jacobi formula,
which is valid for an arbitrary square matrix $A = A(t)$ that depends on a parameter $t$:
$$
\frac{d}{dt}|A(t)|=\tr \big[ \adj(A(t)) \, \frac{d A(t)}{dt} \big].
$$
Here and later $|A|$ denotes the determinant and 
$\textnormal{adj}(A)$ denotes the adjugate matrix of $A$. 
Let $A_{(i,j)}$ be the submatrix obtained by removing the 
$i^{th}$ row and $j^{th}$ column of $A$, 
and let $A_{(i,j),(k,l)}$ be the submatrix obtained by removing rows $i$ and $k$ and columns $j$ and $l$ from $A$. 

Consider the off-diagonal case first, where $i\neq j$.
The Jacobi formula yields $\frac{d}{d A_{i,j}}|A_{(i,j)}|=(-1)^{i+j}|A_{(i,j),(j,i)}|$, so 
that 
\begin{equation}
|A_{(i,j)}|=(-1)^{i+j}|A_{(i,j),(j,i)}|A_{i,j}+a\label{linear}
\end{equation}
for some constant  $a$ (meaning that $a$ does not depend on $A_{i,j}$). 
Further, 
\begin{equation*}
\frac{d}{d A_{i,j}} |A|
= (-1)^{i+j}(|A_{(i,j)}|+|A_{(j,i)}|) = (-1)^{i+j} 2|A_{(i,j)}|
= 2|A_{(i,j),(j,i)}|A_{i,j}+(-1)^{i+j}2a.
\end{equation*}
Thus, for some constant $b$ one has 
\begin{equation}
|A|=|A_{(i,j),(j,i)}|A_{i,j}^2+(-1)^{i+j}2aA_{i,j}+b.		\label{quad}
\end{equation}

Equations~\eqref{linear} and~\eqref{quad} and Cramer's rule imply 
that for all $(i,j)$ there exist constants $p,q$ such that 
\begin{align*}
|(A^{-1})_{i,j}| 
= \left|\frac{|A_{(i,j)}|}{|A|}\right|
=\frac{|A_{i,j}+p|}{\big|(A_{i,j}+p)^2+q\big|}
= \Big| \frac{X}{X^2+q} \Big|, \qquad \text{where } X = A_{i,j}+p.
\end{align*}

First, assume that $q\geq 0$. Then $|(A^{-1})_{i,j}| \le 1/|X|$, and thus 
we have for all $t > 0$:
\begin{equation}					\label{off-diagonal}
\P\{|(A^{-1})_{i,j}|>t\} \le \P\{|X|<1/t\}\le 2K/t. 
\end{equation}

Next, assume $0>q=:-s$; then 
$$
|(A^{-1})_{i,j}| = \frac{1}{|X-s/X|}.
$$
Note that the function $f(x) := x-s/x$ satisfies $f'(x) = 1+s/x^2 > 1$ for all $x \ne 0$. 
Thus the set of points $\{ x \in \R: |f(x)| < \e\}$ has diameter at most $2\e$ for every $\e>0$. 
When $x=X$ is a random variable with density bounded by $K$, it follows that 
$\P \{ |f(X)| < \e \} \le 2 K \e$.
Using this for $\e=1/t$, we obtain
$$
\P\{|(A^{-1})_{i,j}| > t\} \le \P \{ |f(X)| < 1/t \} \le 2K/t.
$$
We have shown that in the off-diagonal case $i \ne j$, the estimate \eqref{off-diagonal} always holds. 

The diagonal case $i=j$ is similar. The Jacobi formula (or just expanding the determinant along $i$-th row)
shows that $|A|=|A_{(i,i)}|A_{i,i} + c$ for some constant $c$. Then a similar analysis yields
$\P\{|(A^{-1})_{i,j}| > t\} \le 2K/t$. This completes the proof.
\end{proof}

\begin{proof}[Proof of Theorem~\ref{thm: main}]
Although the weak $L_1$ norm is not equivalent to a norm, the following inequality holds for any 
finite sequence of random variables $X_i$: 
\begin{equation}				\label{eq: Hag}
\Big\| \big(\sum_i X_i^2 \Big)^{1/2} \Big\|_{1,\infty} \le 4 \sum_i \|X_i\|_{1,\infty}.
\end{equation}
This inequality is due to Hagelstein (see the proof of Theorem~2 in \cite{Hag}); it follows by 
a truncation argument and Chebyshev's inequality.
We use \eqref{eq: Hag} together with the estimates obtained in Lemma~\ref{boundentry}
to bound the Hilbert-Schmidt norm of $A$: 
$$
\big\| \|A^{-1}\|_{\mathrm{HS}} \big\|_{1,\infty}
=\Big\| \Big(\sum_{1\leq i,j\leq n}((A^{-1})_{i,j})^2 \Big)^{1/2}\Big\|_{1,\infty}
\leq 4\sum_{1\leq i,j\leq n}\|(A^{-1})_{i,j}\|_{1,\infty}
\le 8 K n^2.
$$

The definition of the weak $L_1$ norm then yields
$$
\sup_{t>0} t \P\{ \|A^{-1}\|_{\mathrm{HS}} > t\} \leq 8Kn^2. 
$$
Since $\|A^{-1}\| \le \|A^{-1}\|_{\mathrm{HS}}$, the proof of Theorem~\ref{thm: main} is complete.
\end{proof}

\bigskip

\noindent{\bf Acknowledgments.} 
We thank the referees whose suggestions helped to improve the presentation of this paper. 

B.~F. was partially supported by Joel A. Tropp under ONR awards N00014-08-1-0883 and N00014-11-1002 and a Sloan Research Fellowship.  R.~V. was partially supported by NSF grants 1001829, 1265782, and U. S. Air Force Grant FA9550-14-1-0009.

%\def\cprime{$'$} \def\cprime{$'$} \def\cprime{$'$} \def\cprime{$'$}
 % \def\cprime{$'$}

%\bibliographystyle{abbrv}
%\bibliography{../Bibfiles/math,../Bibfiles/pdo,../Bibfiles/ee,../Bibfiles/RMT}

\end{document}